\documentclass[parskip=half]{scrartcl}
\usepackage[utf8]{inputenc}
\usepackage[T1]{fontenc}
\usepackage{lmodern}
\usepackage{amsfonts}
\usepackage{titletoc}
\usepackage[colorlinks=true,linkcolor=black,urlcolor=black,citecolor=black,bookmarks]{hyperref}
\usepackage{amsmath}
\usepackage{amssymb}
\usepackage{mathrsfs}
\usepackage{mathtools}
\usepackage{amsthm}
\usepackage{tabularx}
\usepackage{enumerate}
\usepackage{thmtools}
\usepackage{tikz}
\usepackage{float}
\usepackage{comment}
\usepackage{tikz-cd} 
\usepackage[nameinlink]{cleveref}
\usepackage[onehalfspacing]{setspace}
\usepackage[style=alphabetic, giveninits=true, sorting=nyt,maxnames=4,isbn=false,url=false,doi=false]{biblatex}
\renewbibmacro{in:}{}

\declaretheoremstyle[spaceabove=12pt plus 2pt minus 4pt ,spacebelow=12pt plus 2pt minus 4pt,bodyfont=\normalfont]{scdef}
\declaretheoremstyle[spaceabove=12pt plus 2pt minus 4pt,spacebelow=8pt plus 2pt minus 4pt,bodyfont=\itshape]{scthm} 
\declaretheoremstyle[spaceabove=\topsep,spacebelow=12pt plus 2pt minus 4pt,bodyfont=\normalfont,headfont=\itshape,notefont=\itshape,postheadspace=0.3em,qed=\qedsymbol,headformat=\NAME\NOTE,notebraces=\relax]{scpf}
\declaretheorem[numbered=no,style=scthm,name=Theorem, refname={Theorem,Theorems}, Refname={Theorem,Theorem}] {theorem*}
\declaretheorem[style=scthm,numberwithin=section,name=Theorem, refname={Theorem,Theorems}, Refname={Theorem,Theorem}] {theorem} 
\declaretheorem[style=scthm,sharenumber=theorem, name=Lemma, refname={Lemma,Lemmata}, Refname={Lemma,Lemma}] {lemma} 
\declaretheorem[style=scthm,sharenumber=theorem, name=Corollary, refname={Corollary,Corollaries}, Refname={Corollary,Corollaries}] {corollary}
\declaretheorem[style=scdef,sharenumber=theorem, name=Remark, refname={Remark,Remarks}, Refname={Remark,Remarks}] {remark}
\declaretheorem[style=scdef, sharenumber=theorem, name=Definition, refname={Definition,Definitions}, Refname={Definition,Definitions}]{definition}

\declaretheorem[style=scdef, sharenumber=theorem, name=Proposition, refname={Proposition,Propositions}, Refname={Proposition,Propositions}]{proposition}
\declaretheorem[style=scdef, sharenumber=theorem, name=Example, refname={Example,Examples}, Refname={Example,Examples}]{example}

\newcommand{\ad}{\mathrm{ad}}
\newcommand{\Ad}{\mathrm{Ad}}

\newcommand{\Aut}{\mathrm{Aut}}
\newcommand{\Isom}{\mathrm{Isom}}

\newcommand{\mf}{\mathfrak}
\newcommand{\mc}{\mathcal}
\newcommand{\R}{\mathbb{R}}

\renewcommand{\H}{\mathbb{H}}

\newcommand\extalg{%
  \newlength{\len}%
  \settoheight{\len}{V}%
  \mathbin{%
    \resizebox{0.93\len}{0.93\len}{$\wedge$}%
    \kern-0.1em%
  }}%
\newcommand{\intprod}{\mathbin{\hbox to 0.7ex{%
      \kern-0.3ex
      \vrule height0.0777ex width0.971ex depth0ex
      \kern-0.055ex
      \vrule height1.165ex width0.0777ex depth0ex\hss}}%
}%

  \DeclareMathOperator{\Lie}{Lie}

\dottedcontents{section}[2.9em]{}{2.9em}{1pc}

\addbibresource{OnDegenerate3-(a,d)-SasakianManifolds.bib}
\AtEveryBibitem{\clearlist{language}}

\title{On Degenerate 3-$\boldsymbol{(\alpha,\delta)}$-Sasakian Manifolds}
\author{Oliver Goertsches, Leon Roschig, Leander Stecker}
\date{}

\begin{document}

\maketitle

\begin{abstract}
\begin{center} \textbf{Abstract} \end{center}
We propose a new method to construct degenerate 3-$(\alpha, \delta)$-Sasakian mani- folds as fiber products of Boothby-Wang bundles over hyperkähler manifolds. Subsequently, we study homogeneous degenerate 3-$(\alpha, \delta)$-Sasakian manifolds and prove that no non-trivial compact examples exist as well as that there is exactly one family of nilpotent Lie groups with this geometry, the quaternionic Heisenberg groups.
\end{abstract}

\tableofcontents

\section{Introduction}

Almost (3-)contact metric structures and special cases thereof are among the most well-studied types of geometries in odd dimensions. Arguably the most famous examples are (3-)Sasakian manifolds. \textsc{Agricola} and \textsc{Dileo} have proposed \emph{3-$(\alpha,\delta)$-Sasakian structures} as a generalization of the latter which retain favorable properties like hypernormality or the existence of a connection with skew torsion adapted to the geometry \cite{AD}. These classes of manifolds depend on two real parameters $ \alpha, \delta \in \R $, $ \alpha \neq 0 $, and can be grouped into \emph{degenerate} and \emph{nondegenerate} type according to whether $ \delta = 0 $ or $ \delta \neq 0 $.

In this article, we focus on the degenerate case where the three contact metric structures are more independent of each other in the sense that their Reeb vector fields commute. As will be explained in \Cref{constr}, this allows us to construct examples of such manifolds via fiber products of Boothby-Wang bundles over hyperkähler manifolds. This can be viewed as a counterpart to obtaining 3-Sasakian manifolds from the classical Konishi bundle over positive quaternionic Kähler spaces.

\textsc{Agricola}, \textsc{Dileo} and \textsc{Stecker} have shown that all 3-$(\alpha, \delta)$-Sasakian manifolds locally submerge onto a quaternionic Kähler space whose curvature depends on the sign of $ \alpha\delta $ \cite[Theorem 2.2.1]{ADS}. This result motivated them to construct plentiful examples of homogeneous non-degenerate 3-$(\alpha, \delta)$-Sasakian manifolds over quaternionic Kähler spaces of non-vanishing curvature \cite[Section 3]{ADS}. To some extent, this property also explains why it could be expected that there are rather few homogeneous degenerate 3-$(\alpha,\delta)$-Sasakian manifolds. This is because, ideally, such spaces would submerge onto homogeneous hyperkähler manifolds which are known to be completely flat, i.e.~products of tori and Euclidean spaces \cite{AK}. Compared to the non-degenerate case, however, it is less clear if the corresponding hyperkähler space will actually be a manifold globally. We identify two such scenarios where the base space is indeed a manifold. However, we show that there are restrictions beyond those imposed by \cite{AK} on the base.

Unlike in the non-degenerate case, the Reeb vector fields are now elements of the automorphism algebra of a homogeneous degenerate 3-$(\alpha,\delta)$-Sasakian manifold because they commute with one another. In \Cref{hom}, we show that they comprise the center of the automorphism algebra and conclude that they generate a \emph{closed} subgroup of the automorphism group. Combining this with classical results due to \textsc{Alekseevskii} \cite{Alek} as well as \textsc{Conner} and \textsc{Raymond} \cite{CR}, we prove that the only connected \emph{compact} homogeneous degenerate 3-$(\alpha, \delta) $-Sasakian manifold is the trivial example of the 3-torus $ T^3 $ (\Cref{comp}). Finally, we study simply connected nilpotent Lie groups with a left-invariant degenerate 3-$(\alpha, \delta)$-Sasakian structure in \Cref{Lie}. Again using traditional results by \textsc{Wilson} \cite{Wil} as well as \textsc{Milnor} \cite{Mil}, we show there is exactly one family of such spaces, the \emph{quaternionic Heisenberg groups}. This parallels a result about nilpotent Sasakian Lie groups by \textsc{Andrada}, \textsc{Fino} and \textsc{Vezzoni} \cite[Theorem 3.9]{AFV}.

To sum up, our findings paint the somewhat curious picture that there are no non-trivial compact homogeneous examples in the degenerate 3-$(\alpha,\delta)$-Sasakian category, despite there being both interesting non-compact homogeneous (quaternionic Heisenberg group) and compact inhomogeneous cases ($3$-Boothby-Wang bundles over compact inhomogeneous hyperkähler manifolds, see \Cref{constr}). Qualitatively, this geometry is in stark contrast to the classical 3-Sasakian one where homogeneous examples are automatically compact and there exist as many as compact simple Lie groups (\cite[Theorem C]{BGM}, see also \cite{GRS}).

\textbf{Acknowledgements:} The second named author is supported by the German Academic Scholarship Foundation. The authors thank Ilka Agricola for fruitful discussions about the subject.

\section{Preliminaries}
Let $(M^{2n+1},g)$ be a Riemannian manifold of odd dimension. An \emph{almost contact metric structure} $(\xi,\eta,\varphi)$ is defined by a unit length vector field $\xi\in\mathfrak{X}(M)$, its metric dual $\eta\in \Omega^1(M)$ and an almost hermitian structure $\varphi\in \mathrm{End}(TM)$ on $\ker \eta$ satisfying
\begin{gather*}
\varphi\, \xi=0,\qquad \varphi^2=-\mathrm{id}+\xi\otimes\eta,\qquad g(\varphi X,\varphi Y)=g(X,Y)-\eta(X)\eta(Y) \, , \quad X ,Y \in TM \,.
\end{gather*}
For an almost contact metric manifold $(M,g,\xi,\eta,\varphi)$, we let $\Phi\in\Omega^2(M)$ denote the \emph{fundamental 2-form} given by $\Phi(X,Y)=g(X,\varphi Y)$.

Now suppose that $(M^{4n+3},g)$ is equipped with three almost contact metric structures $(\xi_i,\eta_i,\varphi_i)_{i=1,2,3}$ which are compatible in the sense that
\begin{gather*}
\varphi_i \, \xi_j=\xi_k,\qquad \eta_i\circ\varphi_j=\eta_k, \qquad \varphi_i \circ \varphi_j=\varphi_k+\xi_i\otimes\eta_j
\end{gather*}
for any even permutation $(i\,j\,k)$ of $(1\,2\,3)$. Then, we call $(M,g,\xi_i,\eta_i,\varphi_i)_{i=1,2,3}$ an \emph{almost $3$-contact metric manifold}. By construction, the Reeb vector fields $\xi_i$ are orthogonal and form a $3$-dimensional distribution $\mathcal{V}=\langle\xi_1,\xi_2,\xi_3\rangle$. We call this distribution \emph{vertical} and its orthogonal complement $\mathcal{H}=\bigcap_i \ker\eta_i$ \emph{horizontal}.

\begin{samepage}
\begin{definition}(\cite[Definition 2.2.1]{AD}).
A \emph{$3$-$(\alpha,\delta)$-Sasakian manifold}, $\alpha,\delta\in \R$, $\alpha\neq 0$, is an almost $3$-contact metric manifold satisfying
\begin{align}\label{3addef}
d\eta_i=2\alpha\Phi_i+2(\alpha-\delta)\eta_j\wedge\eta_k
\end{align}
for any even permutation $(i\,j\,k)$ of $(1\,2\,3)$.
\end{definition}
\end{samepage}

In case $\alpha=\delta=1$, the differential condition is just $d\eta_i=2\Phi_i$, which corresponds to standard $3$-Sasakian manifolds. We will recall only those results about $3$-$(\alpha,\delta)$-Sasakian manifolds which we need here. For a more complete survey we refer to the introductory article \cite{AD} as well as the third author's thesis \cite{LeanderDiss}.

Among $3$-$(\alpha,\delta)$-Sasakian manifolds we differentiate two subclasses due to the behavior of their Reeb vector fields.

\begin{proposition}(\cite[Corollary 2.3.1]{AD}.)
The Reeb vector fields are Killing and for any even permutation $(i\,j\,k)$ of $(1\,2\,3)$ they satisfy the commutator relations
\begin{align*}
[\xi_i,\xi_j]=2\delta\xi_k.
\end{align*}
In particular, the vertical distribution is integrable.
\end{proposition}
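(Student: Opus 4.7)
The plan is to first pin down the vertical component of $[\xi_i, \xi_j]$ by a direct Cartan-formula computation on \eqref{3addef}, and then to establish the Killing property of each $\xi_l$, which will rule out any horizontal contribution and complete the bracket identity.

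For the vertical component, the functions $\eta_l(\xi_m) = g(\xi_l, \xi_m) = \delta_{lm}$ are constant, so the expansion
\begin{align*}
d\eta_l(X, Y) = X\eta_l(Y) - Y\eta_l(X) - \eta_l([X, Y])
\end{align*}
collapses at $(X, Y) = (\xi_i, \xi_j)$ to $\eta_l([\xi_i, \xi_j]) = -d\eta_l(\xi_i, \xi_j)$. Substituting \eqref{3addef} and using the compatibility relations --- in particular the identity $\varphi_k \xi_j = -\xi_i$, obtained by applying $\varphi_k^2 = -\mathrm{id} + \xi_k \otimes \eta_k$ to $\varphi_k \xi_i = \xi_j$ --- a short case distinction on $l$ yields $\eta_l([\xi_i, \xi_j]) = 2\delta\, \delta_{lk}$: for $l \in \{i, j\}$ both terms on the right vanish, while for $l = k$ the $\Phi_k$-term contributes $-2\alpha$ and the $\eta_i \wedge \eta_j$-term contributes $2(\alpha - \delta)$, summing to $-2\delta$.

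For the Killing property, I would next run the parallel Cartan computation for $\mathcal{L}_{\xi_l} \eta_m = \iota_{\xi_l} d\eta_m$. Using the mixed compatibility identities $\eta_p \circ \varphi_q = \pm \eta_r$, this produces $\mathcal{L}_{\xi_l} \eta_l = 0$ together with $\mathcal{L}_{\xi_i} \eta_j = 2\delta\, \eta_k$ and $\mathcal{L}_{\xi_i} \eta_k = -2\delta\, \eta_j$ for $(i, j, k)$ an even permutation. In particular every $\mathcal{L}_{\xi_l} \eta_m$ lies in the span of $\eta_1, \eta_2, \eta_3$, so each $\mathcal{L}_{\xi_l}$ preserves the horizontal distribution $\mathcal{H}$. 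A short Leibniz-rule computation then gives $\mathcal{L}_{\xi_l}(\eta_j \wedge \eta_k) = 0$ (for $(l, j, k)$ an even permutation), which combined with $\mathcal{L}_{\xi_l} d\eta_l = d\mathcal{L}_{\xi_l} \eta_l = 0$ forces $\mathcal{L}_{\xi_l} \Phi_l = 0$.

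The main obstacle is upgrading these invariances to $\mathcal{L}_{\xi_l} g = 0$. My plan is to additionally establish $\mathcal{L}_{\xi_l} \varphi_l = 0$ by Lie-differentiating the structural relations $\varphi_l^2 = -\mathrm{id} + \xi_l \otimes \eta_l$ and $\varphi_i \varphi_j = \varphi_k + \xi_i \otimes \eta_j$ and solving the resulting system of tensor equations using the already established invariances. Combining $\mathcal{L}_{\xi_l} \Phi_l = \mathcal{L}_{\xi_l} \varphi_l = \mathcal{L}_{\xi_l} \eta_l = 0$ with the identity $g(X, Y) = -\Phi_l(X, \varphi_l Y) + \eta_l(X) \eta_l(Y)$ then yields $\mathcal{L}_{\xi_l} g = 0$. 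Once each $\xi_l$ is Killing it preserves $\mathcal{V} = \mathcal{H}^\perp$, so $[\xi_i, \xi_j] = \mathcal{L}_{\xi_i} \xi_j \in \mathcal{V}$; combined with the first step this gives $[\xi_i, \xi_j] = 2\delta\, \xi_k$ and the integrability of $\mathcal{V}$.
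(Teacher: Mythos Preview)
The paper does not prove this proposition: it is quoted verbatim from \cite[Corollary~2.3.1]{AD} as a preliminary result, with no argument given. So there is no ``paper's own proof'' to compare against; I can only assess your proposal on its own merits.

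Your computation of the vertical components $\eta_l([\xi_i,\xi_j])=2\delta\,\delta_{lk}$ is correct, as are the identities $\mathcal{L}_{\xi_l}\eta_m\in\langle\eta_1,\eta_2,\eta_3\rangle$ and $\mathcal{L}_{\xi_l}\Phi_l=0$. The gap is in the step you flag as the ``main obstacle'': Lie-differentiating the purely algebraic relations $\varphi_l^2=-\mathrm{id}+\xi_l\otimes\eta_l$ and $\varphi_i\varphi_j=\varphi_k+\xi_i\otimes\eta_j$ produces a system that still contains the unknown brackets $[\xi_l,\xi_m]$ (via $\mathcal{L}_{\xi_l}\xi_m$), whose horizontal parts are precisely what you are trying to show vanish. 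So the system as you describe it does not obviously close.

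There is, however, a direct fix using what you have already computed. From $\mathcal{L}_{\xi_i}d\eta_j=d(\mathcal{L}_{\xi_i}\eta_j)=2\delta\,d\eta_k$ and your formulas for $\mathcal{L}_{\xi_i}(\eta_k\wedge\eta_i)$ one obtains $\mathcal{L}_{\xi_i}\Phi_j=2\delta\,\Phi_k$ as an identity of $2$-forms on all of $TM$, not only on $\mathcal{H}$. Evaluating this on a mixed pair $(\xi_l,X)$ with $X\in\mathcal{H}$ and using that $\Phi_j(\xi_l,\cdot)$ and $\Phi_k(\xi_l,\cdot)$ vanish on $\mathcal{H}$ gives
\[
0=(\mathcal{L}_{\xi_i}\Phi_j)(\xi_l,X)=-\Phi_j([\xi_i,\xi_l],X)=-g\big([\xi_i,\xi_l]_{\mathcal{H}},\varphi_jX\big),
\]
and since $\varphi_j\colon\mathcal{H}\to\mathcal{H}$ is an isomorphism this forces $[\xi_i,\xi_l]_{\mathcal{H}}=0$. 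With this in hand your argument for $\mathcal{L}_{\xi_i}g=0$ (via $g=-\Phi_i(\cdot,\varphi_i\cdot)+\eta_i^2$) goes through. In short: replace the appeal to the algebraic structure relations by the differential identity $\mathcal{L}_{\xi_i}\Phi_j=2\delta\,\Phi_k$ evaluated on mixed arguments, and the proof closes.
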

If $\delta\neq 0$, then the Reeb vector fields span a Lie algebra isomorphic to $\mathfrak{su}(2)$. For $\delta=0$ on the other hand, the vertical Lie algebra is Abelian.
\begin{definition}
A $3$-$(\alpha,\delta)$-Sasakian manifold is called \emph{degenerate} if $\delta=0$ and \emph{non-degenerate} otherwise.
\end{definition}
We write $\Phi_i^\mathcal{H}:=\Phi_i \circ (\pi_\mathcal{H} \otimes \pi_\mathcal{H}) $, where $ \pi_\mc{H}: TM \to \mathcal{H} $ denotes the orthogonal projection. Then for degenerate $3$-$(\alpha,\delta)$-Sasakian manifolds \eqref{3addef} assumes the simpler form
\begin{align}
d\eta_i=2\alpha\Phi_i^\mathcal{H} \, . \label{degencond}
\end{align}
In particular, we have $\ker d\eta_i=\mathcal{V}$.
%
%

\section{A Constructive Result} \label{constr}
Whereas plentiful examples of non-degenerate $3$-$(\alpha,\delta)$-Sasakian structures are known on Konishi bundles over quaternionic Kähler orbifolds of non-vanishing scalar curvature \cite[Section 3]{ADS}, the only known examples of degenerate $3$-$(\alpha,\delta)$-Sasakian manifolds are the quaternionic Heisenberg groups and derived examples (\cite[Subsection 2.2]{AFS}, \cite[Example 2.3.2]{AD}, see also \Cref{heisenberg}). The following construction starting with a hyperkähler manifold is an excerpt form the third author's thesis \cite{LeanderDiss}.

Let $(N,g,I_1,I_2,I_3)$ be a hyperkähler manifold. Suppose that the fundamental two-forms are integer classes $[\omega_i]\in H^2(N,\mathbb{Z})$. Then we obtain a Boothby-Wang bundle $S^1\to P_i\xrightarrow{\ \pi_i\ } N$ for each Kähler structure \cite[Theorem 3]{BW}. In particular, these bundles have Chern class $[\omega_i]$ and the total spaces $P_i$ are equipped with Sasaki structures $(g_i,\tilde{\varphi}_i,\eta_i,\xi_i)$ such that $\xi_i$ generates the fiber.

Now consider the product bundle 
\[
T^3\longrightarrow P_1\times P_2\times P_3\xrightarrow{\ \Pi=(\pi_1,\pi_2,\pi_3)\ } N^3.
\]
Let $M:=\Pi^{-1}(\Delta N)$ denote the restriction of the product bundle $P_1\times P_2\times P_3$ to the diagonal \mbox{$\Delta(N)=\{(x,x,x)\in N^3\}$} and consider $M$ as a fiber bundle $M\xrightarrow{\ \pi\ }N$, where $\pi$ is the composition 
\[
P_1\times P_2\times P_3\supset M\xrightarrow{\ \Pi\ } \Delta(N)\xrightarrow{\ \cong\ } N,\quad (p_1,p_2,p_3)\mapsto (x,x,x)\mapsto x.
\]
If we denote the fiber of $P_i$ over $x\in N$ by $(P_i)_x\coloneqq \pi_i^{-1}(\{x\})\cong S^1$, then the bundle $M\xrightarrow{\pi}N$ can be seen as the fiber bundle with fiber $(P_1)_x\times(P_2)_x\times (P_3)_x$ over any given point $x\in N$. For $p=(p_1,p_2,p_3)\in M$ we have 
\[
T_pM\cong T_xN\oplus \langle\xi_1,\xi_2,\xi_3\rangle
\]
as $\xi_i$ generates the fiber of $P_i$.
The construction as a fiber product bundle ensures that the Reeb vector fields $\xi_i$ are linearly independent and $[\xi_i,\xi_j]=0$, as it ought to be for degenerate $3$-$(\alpha,\delta)$-Sasakian manifolds.

Extend $\eta_i$ trivially, i.e.~$\ker\eta_i=TN\oplus \langle \xi_j,\xi_k\rangle$, with $j,k\neq i$, and $\eta_i(\xi_i)=1$. Set the metric $g\coloneqq \pi^*g_N+\eta_1^2+\eta_2^2+\eta_3^2$, then $\pi\colon (M,g)\to (N,g_N)$ becomes a Riemannian submersion.
\begin{theorem}\label{3BW}
Let $(N,g_N,I_1,I_2,I_3)$ be a hyperkähler manifold with integer fundamental forms. Construct $(M,g,\eta_i,\xi_i)$ as above and set $\varphi_i=\tilde{\varphi}_i+\eta_j\otimes\xi_k-\eta_k\otimes \xi_j$ for any cyclic permutation $(i\,j\,k)$ of $(1\,2\,3)$.\\
Then $(M,g,\xi_i,\eta_i,\varphi_i)_{i=1,2,3} $ is a degenerate $3$-$(\alpha,\delta)$-Sasakian manifold with $\alpha=1$, $\delta=0$.
\end{theorem}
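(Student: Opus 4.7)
The plan is to verify, in order, the almost contact metric axioms on each $(\xi_i,\eta_i,\varphi_i)$, the mixed compatibility conditions relating the three structures, and finally the differential relation $d\eta_i=2\Phi_i^\mathcal{H}$, which by \eqref{degencond} establishes the degenerate $3$-$(1,0)$-Sasakian property. The commutation $[\xi_i,\xi_j]=0$ (i.e.\ $\delta=0$) is automatic since the $\xi_i$ lie in fibers of different factors of $P_1\times P_2\times P_3$ and consequently commute as vector fields.

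The splitting $T_pM\cong T_xN\oplus\langle\xi_1,\xi_2,\xi_3\rangle$ reduces all axiomatic checks to a case analysis on four vector types: horizontal, $\xi_i$, $\xi_j$, and $\xi_k$. By design, $\varphi_i$ agrees with the $\varphi$-tensor $\tilde\varphi_i$ of the Sasakian structure on $P_i$ on horizontal vectors (where it lifts $I_i$) and on $\xi_i$, while the correction $\eta_j\otimes\xi_k-\eta_k\otimes\xi_j$ handles the remaining two vertical directions by producing $\varphi_i\xi_j=\xi_k$ and $\varphi_i\xi_k=-\xi_j$. With this in place, $\varphi_i\xi_i=0$, $\varphi_i^2=-\mathrm{id}+\xi_i\otimes\eta_i$, and the metric compatibility follow from the Sasakian axioms on $P_i$ combined with $g=\pi^*g_N+\sum_a\eta_a^2$. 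The mixed compatibility conditions $\varphi_i\xi_j=\xi_k$ and $\eta_i\circ\varphi_j=\eta_k$ drop out of the same case analysis, and the only substantive identity $\varphi_i\varphi_j=\varphi_k+\xi_i\otimes\eta_j$ reduces on horizontal vectors to the hyperkähler relation $I_iI_j=I_k$ on $N$ and on vertical vectors to cyclic arithmetic.

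The crucial step is the differential relation. The key observation is that the trivially extended one-form $\eta_i$ on $M$ coincides with the pullback $\mathrm{proj}_i^*\eta_i^{P_i}$ along the projection $\mathrm{proj}_i\colon M\to P_i$, because $d\,\mathrm{proj}_i$ sends $\xi_i$ to the Reeb field of $P_i$, annihilates $\xi_j,\xi_k$ (which lie in fibers of $P_j,P_k$), and sends horizontal vectors to horizontal ones — values that match those of $\eta_i^{P_i}$. Therefore
\[
d\eta_i=\mathrm{proj}_i^*\,d\eta_i^{P_i}=2\,\mathrm{proj}_i^*\tilde\Phi_i^{P_i}
\]
by the Sasakian equation on $P_i$. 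A pair-by-pair comparison then yields $\mathrm{proj}_i^*\tilde\Phi_i^{P_i}=\Phi_i^\mathcal{H}$: on horizontal pairs both equal the pullback from $N$ of the Kähler form $\omega_i$, and on any pair containing a vertical vector both vanish, since $\tilde\Phi_i^{P_i}(\xi_i^{P_i},\cdot)=0$ and $d\,\mathrm{proj}_i$ annihilates $\xi_j,\xi_k$.

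The main obstacle is the careful bookkeeping of the three parallel structures on $M$: the formula for $\varphi_i$ combines the tensor $\tilde\varphi_i$ native to $P_i$ with data borrowed from the other two Boothby-Wang factors, and the pullback argument hinges on the interplay between the projections $\mathrm{proj}_i\colon M\to P_i$ and the fiber-product embedding $M\subset P_1\times P_2\times P_3$. Once these identifications are fixed, the remaining verifications are short direct computations on the four vector types.
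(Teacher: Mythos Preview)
Your proposal is correct and follows essentially the same strategy as the paper: verify the almost $3$-contact metric axioms by a horizontal/vertical case split (using that $\varphi_i$ lifts $I_i$ on $\mathcal{H}$ and the correction terms handle $\mathcal{V}$), and then establish \eqref{degencond} from the Sasakian relation on each $P_i$ together with the vanishing of $d\eta_i$ on vertical directions. The only cosmetic difference is that you package the differential step as the pullback identity $\eta_i=\mathrm{proj}_i^*\eta_i^{P_i}$ (so naturality of $d$ kills $\xi_j,\xi_k$), whereas the paper obtains $\xi_j\intprod d\eta_i=0$ via Cartan's formula; these are equivalent formulations of the same observation.
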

\begin{remark}
The $T^3$-bundle $M$ also appears in \textsc{Fowdar}'s paper \cite{Fowdar} in his construction of quaternionic Kähler metrics on $\R\times M$ albeit without the geometric structure, see also \cite[Lemma 2.1]{Cor}.

In \cite{For} \textsc{Foreman} obtains a so-called \emph{complex contact manifold} by constructing a $T^2$-bundle over $N$ in similar fashion. His construction imposes a less restrictive assumption than hyperkähler on $N$. However, in this special case the complex $T^2$-bundle he considers can be obtained from our construction as the quotient by one of the Reeb vector fields. This should be considered as the analogue of the \emph{twistor space} over a positive quaternionic Kähler manifold and the $3$-Sasakian space above (cmp.~\cite{For2}).
\end{remark}
\begin{proof}
Observe that $(g,\eta_i,\xi_i,\varphi_i)$ extends the almost contact metric structure on $T_{p_i}P_i\cong T_xN\oplus \langle\xi_i\rangle$ to $T_pM\cong T_xN\oplus \langle \xi_i,\xi_j,\xi_k\rangle$. The three almost contact metric structures are compatible on the vertical subspace by definition of $\varphi_i$. On the horizontal distribution $\varphi_i$ projects to $I_i$, thus $\varphi_i \circ \varphi_j|_{\mathcal{H}}=\varphi_k|_\mathcal{H}$.

The last identity to check is the differential condition \eqref{degencond}. As $(\xi_i,\eta_i,\tilde{\varphi}_i)$ defines a Sasakian structure on $P_i$ we have $g_i(X,\tilde{\varphi}_iY)=d\tilde{\eta}_i(X,Y)$  for vertical vectors $X,Y\in T_{p_i}P_i$. This remains true for $X,Y\in\mathcal{H}\subset T_pM$ as $\varphi_i$ is just $\tilde{\varphi}_i$ on $\mathcal{H}$. If either vector of $X,Y$ \mbox{lies in $ \mathcal{V}$,} the left hand side of \eqref{degencond} has to vanish. We have
\[
\xi_j\intprod d\eta_i=L_{\xi_j}\eta_i-d(\xi_j\intprod \eta_i)=0
\]
as $\eta_i$ is defined on the factor $P_i$ of the product $P$. In particular, it is invariant under $\xi_j$ for $j\neq i$. Finally, $\xi_i\intprod d\eta_i=0$ from the Sasakian condition on $P_i$.
\end{proof}

We call this bundle the \emph{$3$-Boothby-Wang bundle} over $N$.
As shown in \cite[Theorem 2.2.1]{ADS}, locally any degenerate $3$-$(\alpha,\delta)$-Sasakian manifold maps onto a hyperkähler space via the so-called \emph{canonical submersion}. The above construction describes the converse starting from a hyperkähler manifold. By construction, the Reeb vector fields span the fiber of the bundle $T^3\to M\xrightarrow{\pi} N$. Therefore the canonical submersion agrees globally with $\pi$ mapping onto a hyperkähler manifold, namely the initial manifold $N$. Observe that starting with a compact hyperkähler manifold in this way one obtains a compact degenerate $3$-$(\alpha,\delta)$-Sasakian manifold.

\begin{remark}
In \cite{Cor} \textsc{Cortés} shows that non-flat compact hyperkähler manifolds of arbitrary dimension with integral Kähler forms exist. This implies that there are compact degenerate $3$-$(\alpha,\delta)$-Sasaki manifolds that are not quotients of a quaternionic Heisenberg group.
\end{remark}

\begin{example} \label{3dim}
As a toy example, we could consider $ N $ to be a single point, meaning that $ M $ is just $ T^3 $. It turns out that this is the only connected compact degene- rate 3-$(\alpha,\delta)$-Sasakian manifold in dimension three, which can be seen as follows: If $ (M^3,g, \xi_i,\eta_i,\varphi_i)_{i=1,2,3} $ is a compact connected degenerate 3-$(\alpha,\delta)$-Sasakian 3-manifold, then the Reeb vector fields generate an $ \R^3 $-action on $ M $. The isotropy groups of this action are discrete (since the Reeb vector field vanish nowhere), so any orbit is three-dimensional and thus, by connectedness, equal to all of $ M $. Hence, $ M = G/H $ is a connected compact Abelian Lie group ($ H $ is a normal subgroup, since $ G $ is Abelian), i.e.~a torus. The structure tensors $ (g,\xi_i,\eta_i,\varphi_i)_{i=1,2,3} $ are completely determined by the definition of almost contact 3-structures.
\end{example}

\section{Homogeneous Spaces} \label{hom}

From now on, let $ (M^{4n+3}, g, \xi_i,\eta_i,\varphi_i)_{i=1,2,3} $ be a connected degenerate 3-$(\alpha, \delta)$-Sasakian manifold. A \emph{3-$(\alpha, \delta)$-Sasakian automorphism} of $ M $ is an isometry $ \phi: M \to M $ which satisfies any of the equivalent conditions $ \phi_\ast \xi_i = \xi_i $, $ \phi^*\eta_i =  \eta_i $ or $ \phi \circ \varphi_i = \varphi_i \circ \phi $, $ i =1,2,3 $. Let us now assume that $ M $ is \emph{homogeneous}, i.e.~the identity component $ G:= \Aut_0(M) $ of its group $ \Aut(M) $ of 3-$(\alpha,\delta)$-Sasakian automorphisms acts transitively. The Lie algebra $ \mf{g} $ is (anti-)isomorphic to the space $ \mf{aut}(M) $ of all complete Killing vector fields on $ M $ which commute with the Reeb vector fields $ \xi_1,\xi_2,\xi_3 $ via the map $ \mf{g} \to \mf{aut}(M), \, X \mapsto \overline{X} $, where $ \overline{X} $ denotes the fundamental vector field of the left $ G $-action.

Since the automorphism group acts transitively, the Reeb vector fields are complete. Unlike in the 3-Sasakian and more generally the non-degenerate 3-$(\alpha,\delta)$-Sasakian case, the Reeb vector fields commute with each other, so we have $ \xi_1,\xi_2,\xi_3 \in \mf{aut}(M) $. In fact, the corresponding elements $ X_1,X_2,X_3 \in \mf{g} $ (i.e.~$ \overline{X_i}=\xi_i $) clearly even lie in the center $ Z(\mf{g}) $ of $ \mf{g} $. We will show that $ \mf{g} $ contains no other central elements using techniques very similar to those in \cite[Section 6]{GRS}.

\begin{lemma} \label{lieder}
\[ d\eta_i(\overline{X},\overline{Y}) = \eta_i ([\overline{X}, \overline{Y}]) \qquad i =1,2,3 ,\; X,Y \in \mf{g} \, . \]
\end{lemma}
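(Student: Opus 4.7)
The plan is to leverage the invariance of $\eta_i$ under the automorphism group. By definition, every $\phi \in G$ acts as a 3-$(\alpha,\delta)$-Sasakian automorphism and therefore satisfies $\phi^\ast \eta_i = \eta_i$. Differentiating along the one-parameter subgroups generated by elements of $\mf{g}$ gives $L_{\overline{X}}\eta_i = 0$ for every $X \in \mf{g}$ and every $i \in \{1,2,3\}$.

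First I would apply Cartan's magic formula to conclude
\[
\iota_{\overline{X}}\, d\eta_i \;=\; L_{\overline{X}}\eta_i \;-\; d(\iota_{\overline{X}}\eta_i) \;=\; -d\bigl(\eta_i(\overline{X})\bigr).
\]
Evaluating this on $\overline{Y}$ yields $d\eta_i(\overline{X},\overline{Y}) = -\overline{Y}\bigl(\eta_i(\overline{X})\bigr)$, and the same computation with the roles of $X,Y$ swapped, combined with the antisymmetry of $d\eta_i$, gives $d\eta_i(\overline{X},\overline{Y}) = \overline{X}\bigl(\eta_i(\overline{Y})\bigr)$.

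Finally I would substitute both identities into the standard formula
\[
d\eta_i(\overline{X},\overline{Y}) \;=\; \overline{X}\bigl(\eta_i(\overline{Y})\bigr) \;-\; \overline{Y}\bigl(\eta_i(\overline{X})\bigr) \;-\; \eta_i\bigl([\overline{X},\overline{Y}]\bigr)
\]
for the exterior derivative of a $1$-form. The first two terms on the right-hand side each equal $d\eta_i(\overline{X},\overline{Y})$, so the right-hand side reduces to $2\,d\eta_i(\overline{X},\overline{Y}) - \eta_i([\overline{X},\overline{Y}])$, and rearranging yields the claim. There is no serious obstacle: the whole argument is a direct consequence of the $G$-invariance of the contact forms, mirroring the familiar Chevalley--Eilenberg-type formula for invariant forms on a homogeneous space.
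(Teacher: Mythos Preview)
Your proof is correct and follows essentially the same approach as the paper: both arguments rest on the single observation $\mathcal{L}_{\overline{X}}\eta_i = 0$ and then plug into the standard formula $d\eta_i(\overline{X},\overline{Y}) = \overline{X}(\eta_i(\overline{Y})) - \overline{Y}(\eta_i(\overline{X})) - \eta_i([\overline{X},\overline{Y}])$. The only cosmetic difference is that the paper simplifies the first two terms via the Leibniz rule for $\mathcal{L}_{\overline{X}}$ acting on $\eta_i(\overline{Y})$, whereas you obtain the same identities through Cartan's magic formula; both routes are equivalent and equally short.
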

\begin{proof}
Evaluating the exterior derivative gives
\[ d\eta_i(\overline{X},\overline{Y}) = \overline{X}\big(\eta_i(\overline{Y})\big) - \overline{Y}\big(\eta_i(\overline{X})\big) - \eta_i ([\overline{X},\overline{Y}]) \, . \]
and the Leibniz rule for the Lie derivative implies
\[ \overline{X}\big(\eta_i(\overline{Y})\big) = \mathcal{L}_{\overline{X}} \big(\eta_i(\overline{Y})\big) = \big(\mathcal{L}_{\overline{X}}\eta_i \big) (\overline{Y}) + \eta_i\big(\mathcal{L}_{\overline{X}}\overline{Y}\big) = \eta_i ([\overline{X},\overline{Y}])  \, , \]
where $ \mathcal{L}_{\overline{X}}\eta_i = 0 $ because $ G $ acts by 3-$(\alpha,\delta)$-Sasakian automorphisms.
\end{proof}

\begin{lemma} \label{const}
A vector field of the form $ \sum_{i=1}^3 f_i \, \xi_i $, where $ f_1, f_2,f_3 \in C^\infty(M) $, lies in $ \mf{aut}(M) $ if and only if $ f_1, f_2, f_3 $ are constant functions.
\end{lemma}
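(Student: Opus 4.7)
The reverse implication is immediate: the Reeb vector fields lie in $\mf{aut}(M)$ (as observed in the paragraph preceding \Cref{lieder}) and $\mf{aut}(M)$ is a real vector space.

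For the forward implication, the plan is to exploit two features of an element $X\in \mf{aut}(M)$: first, that $X$ is an infinitesimal 3-$(\alpha,\delta)$-Sasakian automorphism, so $\mathcal{L}_X \eta_i = 0$ for $i=1,2,3$; and second, the degeneracy identity $\ker d\eta_i = \mathcal{V}$ recorded after \eqref{degencond}. Writing $X = \sum_{j=1}^3 f_j \xi_j$, note that $\eta_i(X) = f_i$ since $\eta_i(\xi_j) = \delta_{ij}$. Applying Cartan's magic formula gives
\[
0 = \mathcal{L}_X \eta_i = d(\iota_X \eta_i) + \iota_X d\eta_i = df_i + \iota_X d\eta_i.
\]
Since $X$ is a combination of Reeb vector fields, it takes values in the vertical distribution $\mathcal{V}$, and by the degenerate condition $\iota_X d\eta_i = 0$. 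Hence $df_i = 0$, so each $f_i$ is (locally, and by connectedness of $M$, globally) constant.

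I would also note, for the claim that constant combinations actually lie in $\mf{aut}(M)$, that the Reeb vector fields $\xi_i$ are complete (because $G$ acts transitively, as already observed) and pairwise commuting in the degenerate case, so any constant combination is a complete Killing field commuting with every $\xi_j$. There is no real obstacle here; the only subtlety is remembering that the degeneracy of the structure is what forces $\iota_X d\eta_i$ to vanish for vertical $X$, and this is precisely what makes the lemma fail in the non-degenerate case.
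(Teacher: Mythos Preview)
Your proof is correct and follows essentially the same idea as the paper: both arguments exploit $\mathcal{L}_X\eta_i=0$ together with the degenerate fact that vertical vectors lie in $\ker d\eta_i$. The paper expands $(\mathcal{L}_V\eta_j)(W)$ via a Leibniz-type rule using $\mathcal{L}_{\xi_i}\eta_j=0$, whereas you apply Cartan's formula directly to obtain $df_i+\iota_X d\eta_i=0$ and then kill the second term; these are two packagings of the same computation, and your version is arguably the cleaner one.
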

\begin{proof}
If $ V = \sum_{i=1}^3 f_i \,\xi_i \in \mf{aut}(M) $, then for $ j =1,2,3 $ and $ W \in \mf{X}(M) $, we have:
\[ 0 = \big(\mathcal{L}_V \eta_j\big)(W) = \sum_{i=1}^3 \Big(f_i \underbrace{(\mathcal{L}_{\xi_i} \eta_j)}_{=0}(W) - W(f_i) \underbrace{\eta_j(\xi_i)}_{=\delta_{ij}}\Big) = -W(f_j) \, . \]
Since $ M $ is connected, $ f_1,f_2,f_3 $ have to be constant.
\end{proof}

\begin{proposition} \label{centerAut}
$ Z(\mf{g}) = \mf{k} := \langle X_1,X_2,X_3 \rangle $.
\end{proposition}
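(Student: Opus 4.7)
The inclusion $\mathfrak{k} \subseteq Z(\mathfrak{g})$ is already established in the paragraph preceding \Cref{lieder}, since $X_1,X_2,X_3$ were introduced there as central elements. The plan is therefore to prove the reverse inclusion by showing that any central $X$ must, as a vector field $\overline{X}$, be vertical with constant coefficients.

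First I would take $X \in Z(\mathfrak{g})$ and note that under the (anti-)isomorphism $\mathfrak{g} \to \mathfrak{aut}(M)$ the corresponding field $\overline{X}$ commutes with every fundamental field $\overline{Y}$, $Y \in \mathfrak{g}$. Plugging this into \Cref{lieder} yields
\[ d\eta_i(\overline{X},\overline{Y}) = \eta_i([\overline{X},\overline{Y}]) = 0 \qquad \text{for all } Y \in \mathfrak{g}, \; i=1,2,3. \]
Since $G$ acts transitively, the fundamental vector fields span $T_pM$ at every $p$, so the above says $\iota_{\overline{X}} d\eta_i = 0$ as a 1-form on $M$, i.e.~$\overline{X}_p \in \ker (d\eta_i)_p$ for every $p$ and every $i$.

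Next I would invoke the observation immediately following \eqref{degencond}, namely that $\ker d\eta_i = \mathcal{V}$ in the degenerate setting (which is immediate from $d\eta_i = 2\alpha \Phi_i^{\mathcal{H}}$, $\alpha \neq 0$, together with the fact that $\Phi_i^{\mathcal{H}}$ is non-degenerate on $\mathcal{H}$). Taking the intersection over $i$ still gives $\mathcal{V}$, so $\overline{X}$ is a section of $\mathcal{V}$. Hence we may write $\overline{X} = \sum_{i=1}^3 f_i \, \xi_i$ with $f_i = \eta_i(\overline{X}) \in C^\infty(M)$.

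Finally, since $\overline{X} \in \mathfrak{aut}(M)$ by construction, \Cref{const} forces the coefficients $f_1,f_2,f_3$ to be constant, so $\overline{X}$ is a real linear combination of $\xi_1,\xi_2,\xi_3$, giving $X \in \mathfrak{k}$. The only conceptual step that needs any care is the passage from centrality in $\mathfrak{g}$ to $[\overline{X},\overline{Y}]=0$ as vector fields, but this is automatic regardless of whether the map $\mathfrak{g} \to \mathfrak{aut}(M)$ is a homomorphism or an anti-homomorphism; everything else is a clean chain of invocations of the two preceding lemmas plus the already-noted identity $\ker d\eta_i = \mathcal{V}$.
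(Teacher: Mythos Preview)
Your proof is correct and follows essentially the same strategy as the paper's: use \Cref{lieder} together with transitivity to show that $\overline{X}$ is vertical, then apply \Cref{const} to conclude $X \in \mathfrak{k}$. The only cosmetic difference is that you invoke the already-noted identity $\ker d\eta_i = \mathcal{V}$, whereas the paper instead tests against the single vector $\overline{Y}_p = \varphi_1 \overline{X}_p$ and computes $-d\eta_1(\overline{X}_p,\varphi_1\overline{X}_p) = 2\alpha\|(\overline{X}_p)_{\mathcal{H}}\|^2$ directly.
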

\begin{proof}
Let $ X \in Z(\mf{g}) $ and choose some $ p \in M $ and $ Y \in \mf{g} $ such that $ \overline{Y}_p = \varphi_1 \overline{X}_p $ (which is possible, since $ G $ acts transitively). Then, by virtue of \autoref{lieder}:
\begin{align*}
0 = \eta_1(\overline{[X,Y]}_p) = -d\eta_1(\overline{X}_p,\varphi_1\overline{X}_p) = 2\alpha \left\|(\overline{X}_p)_\mathcal{H}\right\|^2 \, ,
\end{align*}
where $ v_\mathcal{H} $ denotes the horizontal component of a tangent vector $ v \in TM $. Since $ \alpha \neq 0 $ and $ p \in M $ was arbitrary, the vector field $ \overline{X} $ has to be completely vertical and therefore of the form described in \autoref{const}. Consequently, $ \overline{X} = \sum_{i=1}^3 a_i\xi_i $ for some $ a_1,a_2,a_3 \in \R $ and $ X = \sum_{i=1}^3 a_iX_i \in \mf{k} $.
\end{proof}

\begin{corollary} \label{closed}
The connected subgroup $ K \subset G $ with Lie algebra $ \mf{k}$ is closed.
\end{corollary}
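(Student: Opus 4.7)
The plan is to identify $K$ with the identity component of the center $Z(G)$ of $G$, from which closedness follows on general Lie-theoretic grounds. The crucial input is \Cref{centerAut}, which provides the equality $\mf{k} = Z(\mf{g})$.

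First, I would invoke the standard identification, valid for connected $G$, that $Z(G) = \ker(\Ad: G \to \mathrm{GL}(\mf{g}))$, and consequently $\Lie(Z(G)) = \ker(\ad) = Z(\mf{g}) = \mf{k}$. Since $G = \Aut_0(M)$ is connected by construction, this applies. Hence the identity component $Z(G)_0$ is a connected Lie subgroup of $G$ whose Lie algebra is $\mf{k}$. By uniqueness of the connected Lie subgroup attached to a given Lie subalgebra, we conclude $K = Z(G)_0$.

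Finally, $Z(G)$ is closed in $G$ as the preimage of the identity under the continuous homomorphism $\Ad$, and the identity component of any closed subgroup of a Lie group is again closed. Therefore $K = Z(G)_0$ is closed in $G$. There is no genuine obstacle in this line of argument beyond recalling the identification $\Lie(Z(G)) = Z(\mf{g})$ for connected Lie groups; once \Cref{centerAut} has been established, the remainder of the proof is purely formal.
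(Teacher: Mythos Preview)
Your proof is correct and follows essentially the same approach as the paper: using \Cref{centerAut} to identify $K$ with the identity component $Z_0(G)$ of the center, and then noting that $Z(G)$ is closed and identity components of closed subgroups are closed. The only difference is that you make the identification $\Lie(Z(G)) = Z(\mf{g})$ explicit via $\Ad$, whereas the paper simply states it.
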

\begin{proof}
Since $ G $ is connected, the Lie algebra of $ Z(G) $ is given by $ Z(\mf{g}) $. Thus, \autoref{centerAut} implies that $ K = Z_0(G) $, the identity component of the center. Since $ Z(G) $ is closed in $ G $ and $ Z_0(G) $ is closed in $ Z(G) $, it follows that $ K $ is closed in $ G $.
\end{proof}

\section{Compact Homogeneous Spaces} \label{comp}

Let us now additionally assume that $ M $ is compact. Then its isometry group $ \Isom(M) $ as well as the closed subgroups $ \Aut(M) $ and $ \Aut_0(M) = G $ are also compact, as is $ K $ by \autoref{closed}. Thus, the space $M/K$ of $K$-orbits is at least an orbifold. However, since $ M $ is $ G $-homogeneous, the $ K $-action on $ M $ only has one isotropy type. Hence, the orbit space $ M/K $ is a compact smooth manifold, in fact:

\begin{proposition}\label{Torus}
$ M/K $ is a torus.
\end{proposition}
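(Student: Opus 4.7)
The plan is to realize $M/K$ as a compact homogeneous hyperkähler manifold and invoke the Alekseevskii--Kimelfeld classification already cited as \cite{AK} in the introduction. The argument has three conceptual steps: verifying that $K$ is a torus, identifying $M/K$ with the base of the canonical submersion and checking it inherits both a hyperkähler structure and a transitive isometric action, and finally applying \cite{AK}.

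First I would note that $K$ itself is a torus. By \autoref{closed}, $K$ is closed in $G$; since $G \subset \Isom(M)$ is compact and $K$ is connected abelian (with $\mathfrak{k}$ being the abelian Lie algebra $\langle X_1,X_2,X_3\rangle$), $K$ is a compact connected abelian Lie group, hence a torus. The Reeb vector fields are pointwise linearly independent, so the $K$-action is locally free, and the fundamental vector fields of $X_1,X_2,X_3$ span each tangent space to a $K$-orbit.

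Next I would set up the hyperkähler structure on $M/K$. Since the $K$-orbits are exactly the integral submanifolds of the vertical distribution $\mathcal{V}$, the submersion $\pi:M\to M/K$ realizes globally the canonical submersion of \cite[Theorem 2.2.1]{ADS}. The metric $g|_\mathcal{H}$ and the horizontal almost complex structures $\varphi_i|_\mathcal{H}$ are $K$-invariant (because $K$ is generated by the Reeb flows, which are 3-$(\alpha,\delta)$-Sasakian isometries), so they descend to a Riemannian metric $\bar g$ and endomorphisms $I_1,I_2,I_3$ making $(M/K,\bar g,I_1,I_2,I_3)$ a hyperkähler manifold, as guaranteed by the local statement in \cite{ADS}. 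The key point is that $M/K$ is a genuine manifold here (not merely an orbit space), which follows from the single-isotropy-type observation made just before the statement.

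Now since $K \subset Z(G)$, the left $G$-action on $M$ descends to a $G$-action on $M/K$ (which factors through $G/K$), and this action is transitive because $G$ acts transitively on $M$. Elements of $G$ act by 3-$(\alpha,\delta)$-Sasakian automorphisms, hence preserve $\mathcal{H}$, $g|_\mathcal{H}$, and the $\varphi_i|_\mathcal{H}$; consequently $G/K$ acts on $M/K$ by hyperkähler isometries. Thus $M/K$ is a compact homogeneous hyperkähler manifold. By \cite{AK}, every homogeneous hyperkähler manifold is flat and isometric to a product of a torus and a Euclidean space; compactness of $M/K$ forces the Euclidean factor to be trivial, and $M/K$ is a torus.

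The only real obstacle is the global descent of the hyperkähler structure and of the $G$-action: once $K$ is known to be closed (\autoref{closed}) and central, these descents are formal, but one should check explicitly that the hypotheses of \cite[Theorem 2.2.1]{ADS} align with our global $K$-action so that the descended tensors on $M/K$ genuinely form a hyperkähler structure rather than merely a local model.
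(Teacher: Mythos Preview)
Your proposal is correct and follows essentially the same route as the paper: identify $M/K$ with the leaf space of the characteristic foliation, equip it with the hyperkähler structure coming from \cite[Theorem 2.2.1]{ADS}, observe that the $G$-action descends so that $M/K$ is homogeneous hyperkähler, and then appeal to Alekseevskii. The only minor differences are that you add the (unnecessary but harmless) observation that $K$ itself is a torus, and that you cite \cite{AK} (flatness, hence torus $\times$ Euclidean, then use compactness) whereas the paper invokes \cite[Theorem 1 b)]{Alek} directly.
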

\begin{proof} \textsc{Agricola}, \textsc{Dileo} and \textsc{Stecker} have shown that the space of leaves of the cha- racteristic foliation of a degenerate 3-$(\alpha,\delta)$-Sasakian manifold (i.e.~the foliation generated by the Reeb vector fields) locally admits a hyperkähler structure \cite[Theorem 2.2.1]{ADS}. Concretely, the hyperkähler forms are induced by $d\eta_i$ which are basic by \eqref{degencond}. Since $ X_1,X_2,X_3 $ are the infinitesimal generators of both $ K $ and the characteristic foliation, the leaf space is given by the orbit space $ M/K $. By construction, the hyperkähler structure on $ M/K $ is also $ G $-homogeneous. A result by \textsc{Alekseevskii} then concludes that $ M/K $ has to be a torus \cite[Theorem 1 b)]{Alek}.
\end{proof}

\begin{theorem}
The only connected compact homogeneous degenerate 3-$(\alpha,\delta) $-Sasakian manifold is $ T^3 $.
\end{theorem}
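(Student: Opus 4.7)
The plan is to leverage the principal $T^3$-fibration $K \hookrightarrow M \twoheadrightarrow M/K$, combined with the long exact sequence of homotopy groups of the fibration $KH \hookrightarrow G \twoheadrightarrow M/K$, to force $\dim M = 3$; \Cref{3dim} then finishes by giving $M = T^3$.

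First I note that since $M$ is compact, so is $\Isom(M)$, and hence also $G = \Aut_0(M)$; in particular $G$ is a compact connected Lie group. By \Cref{centerAut}, $Z(\mf g) = \mf k$, so $Z(G)_0$ is a compact connected abelian Lie group of dimension $3$, i.e.~a torus, which coincides with $K$ by \Cref{closed}. Writing $M = G/H$ for $H$ the isotropy of a basepoint and using that $K$ is central, we get $M/K = G/(KH)$; by \Cref{Torus} this is a torus $T^{4n}$, where $\dim M = 4n+3$.

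Next I apply the homotopy long exact sequence of the fibration $KH \hookrightarrow G \twoheadrightarrow T^{4n}$:
\[\pi_1(KH) \to \pi_1(G) \to \pi_1(T^{4n}) = \Z^{4n} \to \pi_0(KH) \to \pi_0(G) = 0.\]
Since $\pi_0(KH)$ is finite (as $KH$ is closed in the compact $G$) and $\Z^{4n}$ is torsion-free, the image of $\pi_1(G)$ in $\Z^{4n}$ has finite index, in particular rank $4n$. On the other hand, for a compact connected Lie group the free rank of $\pi_1(G)$ equals $\dim Z(G)_0$; this standard fact follows from the decomposition of $G$ as a finite quotient of $Z(G)_0 \times G_{\mathrm{ss}}$ with $G_{\mathrm{ss}}$ compact semisimple, so $\pi_1(G_{\mathrm{ss}})$ is finite. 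Since $\dim Z(G)_0 = 3$, we obtain $4n \leq 3$, whence $n = 0$ and $\dim M = 3$. An appeal to \Cref{3dim} then concludes $M = T^3$.

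The main obstacle, or critical input beyond the preceding preparatory results, is the rank formula for $\pi_1(G)$ of a compact connected Lie group together with the identification of $\dim Z(G)_0$, the latter already arranged by \Cref{centerAut}. Should one prefer to avoid invoking this structural fact, the reference to Conner--Raymond mentioned in the introduction presumably encapsulates an analogous rigidity statement about transitive compact Lie group actions on tori, which would yield the same conclusion by a slightly different route.
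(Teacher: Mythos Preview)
Your argument is correct. The key steps---identifying $M/K$ with $G/(KH)$ via centrality of $K$, running the homotopy long exact sequence of the fibration $KH \hookrightarrow G \to T^{4n}$, and bounding the free rank of $\pi_1(G)$ by $\dim Z(G)_0 = 3$ using the structure theory of compact connected Lie groups---all go through. The only small points worth making explicit are that $KH$ is closed (since $K$ is compact and $H$ is closed) and that the $K$-orbits are $3$-dimensional (since $\mf k \cap \mf h = 0$, as the Reeb vector fields vanish nowhere), so that $\dim(M/K) = 4n$; you implicitly use both.

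The paper takes a different route, essentially the one you anticipate in your final remark. It lets $L$ be the kernel of the $G$-action on the torus $M/K$ and invokes Conner--Raymond to conclude that the compact connected group $G/L$, acting effectively on a closed aspherical manifold, must itself be a torus. A short Lie-algebraic argument then shows that the complementary ideal $\mf m = \mf l^\perp$ sits inside $[\mf g,\mf g]$ (since $K \subset L$), hence is semisimple, yet is isomorphic to the abelian algebra $\mf g/\mf l$; so $\mf m = 0$, $G$ acts trivially on $M/K$, and $M/K$ is a point. Your approach trades the Conner--Raymond input for the structure theorem for compact Lie groups and a homotopy exact sequence, which is arguably more self-contained; the paper's version, on the other hand, makes the geometric mechanism (trivial action on the base) more transparent. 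Both ultimately rest on \Cref{centerAut}, which pins down $\dim Z(\mf g) = 3$.
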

\begin{proof}
Let $ L $ be the kernel of the $ G $-action on the hyperkähler quotient space $ M/K $. Then, $ G/L $ is a connected compact Lie group which acts effectively on a closed aspherical manifold (i.e.~a manifold whose universal cover is contractible). It follows from the work of \textsc{Conner} and \textsc{Raymond} that $ G/L $ is also a torus \cite[Theorem 5.6]{CR}. \\
Since $ L $ is a normal subgroup of $ G $, its Lie algebra $ \mf{l} $ is an ideal in $ \mf{g} $. Because $ G $ is compact, we may choose an $ \Ad(G) $-invariant (and thus also $ \ad(\mf{g}) $-invariant) inner product on $ \mf{g} $ and consider the complementary ideal $ \mf{m} := \mf{l}^\perp $. Again by compactness of $ G $, we have $ \mf{g} = [\mf{g},\mf{g}] \oplus \mf{k} $ and $ [\mf{g},\mf{g}] = \mf{k}^\perp $ (independently of the choice of the inner product). Since $ K \subset L $, it follows that $ \mf{m} $ is an ideal inside the semisimple algebra $ [\mf{g},\mf{g}] $ and is therefore itself semisimple. But on the other hand, $ \mf{m} $ is isomorphic to the Lie algebra $ \mf{g}/\mf{l} $ of $ G/L $, which is Abelian. Consequently, $ \mf{m} = \{0\} $. \\
It follows that $ G/L $ has to be the trivial torus, meaning that $ G $ acts trivially on $ M/K $. Since this action is also transitive, $ M/K $ is a singleton and $ M $ is 3-dimensional. By virtue of \autoref{3dim}, $ M $ is isomorphic to $ T^3 $.
\end{proof}

\section{Nilpotent Lie Groups} \label{Lie}

In this section, we want to investigate simply connected nilpotent Lie groups with a left-invariant degenerate 3-$(\alpha,\delta)$-Sasakian structure. Let us first recall the specific example of the \emph{quaternionic Heisenberg group}, which was first described as a naturally reductive space in \cite{AFS} and later identified as a homogeneous degenerate 3-$(\alpha, \delta) $-Sasakian manifold in \cite{AD}.

\begin{example} \label{heisenberg}
The \emph{quaternionic Heisenberg group} of dimension $ 4n+3 $ is the simply connected Lie group $ H $ determined by the following Lie algebra $ \mf{h} $: As a set, $ \mf{h} = Z \oplus \H^n $, where $ Z $ is the span of the imaginary quaternions $ i,j,k $, which we will denote by $ \xi_1,\xi_2,\xi_3 $. We define an inner product $ g $ on $ \mf{h} $ by requiring that $ \xi_1,\xi_2,\xi_3 $ are orthonormal, $ g|_{\H^n \times \H^n} $ is the standard Euclidean inner product on $ \H^n $ and $ g|_{Z \times H^n} = 0 $. Let $ \eta_i $ be the $ g $-dual 1-form of $ \xi_i $ and $ \varphi_i $ be the endomorphism of $ \mf{h} $ such that $ \varphi_i \, \xi_j = \xi_k $ and $ \varphi_i|_{\H^n} $ is quaternionic multiplication from the left by $ \xi_i $. Finally, the Lie bracket on $ \mf{h} $ is determined by the conditions $ Z(\mf{h}) = Z $, $ [\H^n,\H^n] \subset Z $ (implying that $ \mf{h} $ is 2-step nilpotent) and
\[ g([X,Y],\xi_i) = g(\varphi_i X, Y) \quad \forall \; X,Y \in \H^n \, . \]
As shown in \cite[Subsection 2.2]{AFS} and \cite[Example 2.3.2]{AD}, the corresponding left-invariant tensor fields $ (g, \xi_i,\eta_i,\varphi_i)_{i=1,2,3} $ constitute a homogeneous degenerate 3-$(\alpha,\delta)$-Sasakian structure on $ H $ with $ \alpha = 1/2 $. As always, one can modify this structure (and in particular the value of $ \alpha$) by applying an \emph{$ \mc{H} $-homothetic deformation} \cite[Definition 2.3.1]{AD}. In fact, the presentation of the quaternionic Heisenberg group in \cite{AFS} and \cite{AD} already incorporates an $ \mc{H} $-homothetic deformation with parameters $ a = 1$, $ b = \lambda^2 -1 $ and $ c = \lambda $, where $ \lambda > 0 $.
\end{example}

From now on, let $ N^{4n+3} $ denote a simply connected nilpotent Lie group with a left-invariant degenerate 3-$(\alpha,\delta)$-Sasakian structure $ (g,\xi_i,\eta_i,\varphi_i)_{i=1,2,3} $. The Lie algebra \mbox{$\mf{n} = \mc{H} \oplus \mc{V} $} decomposes both as vector spaces and as left-invariant distributions inside the tangent bundle.

\begin{lemma} \label{center}
$ Z(\mf{n}) \subset \mc{V} $.
\end{lemma}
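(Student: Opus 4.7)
The plan is to mimic the argument used for \Cref{centerAut}, exploiting the fact that a left-invariant geometry on a Lie group behaves formally like the homogeneous setting.

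First I would establish the left-invariant analogue of \Cref{lieder}: for any two left-invariant vector fields $X,Y \in \mf{n}$ (identified with the corresponding vectors in $T_eN$), the function $\eta_i(Y)$ is constant because both $\eta_i$ and $Y$ are left-invariant. Hence the Cartan formula reduces to
\[
d\eta_i(X,Y) = -\eta_i([X,Y]) \, .
\]
(This is just the Chevalley--Eilenberg / Maurer--Cartan identity.)

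Next, take any $X \in Z(\mf{n})$. Then $[X,Y] = 0$ for every $Y \in \mf{n}$, so the previous identity gives $d\eta_i(X,Y) = 0$ for all $Y$ and all $i = 1,2,3$. In other words, $X$ lies in $\ker d\eta_i$ for each $i$. But the degeneracy condition \eqref{degencond} says $d\eta_i = 2\alpha\, \Phi_i^{\mc{H}}$, and from this one reads off $\ker d\eta_i = \mc{V}$: indeed, if $\Phi_i^{\mc{H}}(X,Y) = g(\pi_{\mc{H}} X, \varphi_i \pi_{\mc{H}} Y) = 0$ for all $Y$, then since $\varphi_i$ restricts to an automorphism of $\mc{H}$, the vector $\pi_{\mc{H}} X$ must be $g$-orthogonal to all of $\mc{H}$ and so vanishes. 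Thus $X \in \mc{V}$, proving $Z(\mf{n}) \subset \mc{V}$.

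There is no real obstacle here; the only point to be careful about is that the identity $d\eta_i(X,Y) = -\eta_i([X,Y])$ is being used for left-invariant fields (where the $\eta_i$-evaluations are constant), not for arbitrary fields. Everything else is immediate from \eqref{degencond} and the nondegeneracy of $g$ on $\mc{H}$.
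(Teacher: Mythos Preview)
Your proof is correct and follows essentially the same route as the paper: both use the Maurer--Cartan identity $d\eta_i(X,Y) = -\eta_i([X,Y])$ for left-invariant fields and then exploit the nondegeneracy of $d\eta_i$ on $\mc{H}$. The only cosmetic difference is that the paper picks the explicit test vector $Y = \varphi_1 X$ to obtain $0 = 2\alpha\|X_\mc{H}\|^2$ directly, whereas you phrase it as $X \in \ker d\eta_i = \mc{V}$ (a fact already recorded after \eqref{degencond}).
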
 \vspace{-.5cm}
\begin{proof}
The derivative of the left-invariant 1-form $\eta_1$ is given by $ d\eta_1(X,Y) = -\eta_1([X,Y]) $ for all $ X,Y \in \mf{n} $. Thus if $ X \in Z(\mf{n}) $, then Condition \eqref{degencond} implies that for $ Y := \varphi_1X $: \vspace{-.25cm}
\[ \vspace{-.25cm} 0 = \eta_1([X,Y]) = -d\eta_1(X,Y) = 2\alpha \|X_\mc{H}\|^2 \, . \]
\end{proof}

\begin{proposition}
$ Z(\mf{n}) = \mc{V} $ and $ \mf{n} $ is 2-step nilpotent.
\end{proposition}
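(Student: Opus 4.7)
For $Z(\mf{n}) = \mc{V}$, the inclusion $Z(\mf{n}) \subset \mc{V}$ is \autoref{center}. For the reverse, note that each Reeb field $\xi_i$ is a left-invariant Killing vector field on $N$. For any left-invariant $V$, the Koszul formula applied in a left-invariant frame (where all inner products are constant) translates the Killing condition into skew-symmetry of $\ad_V$ on $\mf{n}$ with respect to $g$. Thus each $\ad_{\xi_i}$ is skew-symmetric. Since $\mf{n}$ is nilpotent, Engel's theorem makes $\ad_{\xi_i}$ nilpotent as well. A real skew-symmetric endomorphism is normal, hence semisimple over $\C$ with purely imaginary eigenvalues; if it is also nilpotent its spectrum is $\{0\}$ and it vanishes. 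Therefore $\xi_i \in Z(\mf{n})$, giving $\mc{V} \subset Z(\mf{n})$.

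For the 2-step nilpotency, let $K \subset N$ be the connected subgroup with Lie algebra $\mc{V}$. By the first part, $K$ is the identity component of $Z(N)$, hence closed and central, and $N/K$ is a simply connected nilpotent Lie group. The key step is that $N/K$ carries a left-invariant hyperkähler structure: by \cite[Theorem 2.2.1]{ADS}, the leaf space of the characteristic foliation of a degenerate 3-$(\alpha,\delta)$-Sasakian manifold is locally hyperkähler; in the present setting this foliation is exactly the fibration by left $K$-orbits, $K$ acts freely and properly, so the local construction assembles into a global hyperkähler structure on $N/K$. By $N$-equivariance, $N/K$ becomes a homogeneous hyperkähler Lie group, which by \cite{AK} must be flat. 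Since $N/K$ is simply connected and complete it is then isometric to Euclidean space, and Milnor's classification \cite{Mil} of Lie algebras admitting a flat left-invariant metric yields a decomposition $\mf{n}/\mc{V} = \mf{b} \oplus \mf{u}$ with $\mf{b}$ abelian, $\mf{u}$ an abelian ideal, and $\ad(\mf{b})|_{\mf{u}}$ skew-symmetric. Reapplying the skew-plus-nilpotent observation from the first part forces $\ad(\mf{b})|_{\mf{u}} = 0$, so $\mf{n}/\mc{V}$ is abelian, i.e., $[\mf{n},\mf{n}] \subset \mc{V} = Z(\mf{n})$ and $\mf{n}$ is 2-step nilpotent.

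The main obstacle is the middle step: verifying that the ADS canonical submersion descends \emph{globally} on $N$ and produces a genuine homogeneous hyperkähler manifold $N/K$ to which \cite{AK} applies. Once this is set up, the remaining chain (homogeneous hyperkähler $\Rightarrow$ flat $\Rightarrow$ abelian via Milnor) is a direct application of the cited results, with the skew-plus-nilpotent lemma used twice -- first to place the $\xi_i$ in the center, then to collapse Milnor's decomposition for the quotient.
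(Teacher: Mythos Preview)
Your proof is correct and takes a genuinely different route from the paper's.

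For the inclusion $\mc{V}\subset Z(\mf{n})$, the paper does \emph{not} argue directly. Instead it first forms the leaf space $N/V$ as a smooth manifold (using only that $V$ is closed, via the global diffeomorphism property of $\exp$ on simply connected nilpotent groups), equips it with the homogeneous hyperkähler structure from \cite{ADS}, and then invokes Wilson's theorem \cite{Wil}: the effective nilpotent transitive isometry group $N/W$ (with $W$ the normal core of $V$) must be isometric to $N/V$, forcing $\dim W=\dim V$ and hence $\mc{V}=\Lie(W)\trianglelefteq\mf{n}$. Only then does the paper use $\eta_j([X,\xi_i])=d\eta_j(\xi_i,X)=0$ to upgrade ``ideal'' to ``central''. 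Your argument --- $\xi_i$ Killing and left-invariant $\Rightarrow\ad_{\xi_i}$ skew-symmetric, Engel $\Rightarrow\ad_{\xi_i}$ nilpotent, skew $+$ nilpotent $\Rightarrow\ad_{\xi_i}=0$ --- is shorter, entirely Lie-algebraic, and eliminates the need for Wilson's theorem altogether. It also makes the subsequent quotient step cleaner, since you already know $K$ is central (hence closed, normal, and $N/K$ a Lie group) before forming it.

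For the 2-step conclusion, the paper is slightly more economical: once $N/K$ is hyperkähler it is Ricci-flat, and Milnor's \cite[Theorem 2.4]{Mil} says directly that a nilpotent Lie group with Ricci-flat left-invariant metric is abelian. You insert an extra step, passing through \cite{AK} (homogeneous Ricci-flat $\Rightarrow$ flat) and then Milnor's structure theorem for \emph{flat} left-invariant metrics, reusing your skew-plus-nilpotent lemma to collapse the $\mf{b}\oplus\mf{u}$ decomposition. This works, but the detour through flatness and \cite{AK} is unnecessary here: Ricci-flatness alone suffices for nilpotent groups.
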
 \vspace{-.5cm}
\begin{proof}
Let $ V $ denote the connected subgroup of $ N $ with Lie algebra $ \mc{V} $. Since $ N $ is simply connected and nilpotent, the (Lie group) exponential map of $ N $ is a global diffeomorphism \cite[Theorem 1.127]{Knap}. Hence, $ V $ is a closed subgroup and $ N/V $ is (globally) a smooth manifold. Now, $ N/V $ is also the space of leaves of the characteristic foliation of $ N $, which admits a hyperkähler structure (\cite[Corollary 2.2.1]{ADS}) that is homogeneous as argued in the proof of \autoref{Torus}. \\
Let $ W $ denote the kernel of the $ N $-action on $ N /V $. Then $ W $ is known as the \emph{normal core of $ V $} and it is the largest subgroup of $ V $ which is normal in $ N $. The connected nilpotent quotient group $ N/W $ acts effectively and transitively by isometries on $ N/V $, so $ N/V $ is isometric to $ N/W $ by virtue of a classical result due to \textsc{Wilson} \mbox{\cite[Theorem 2]{Wil}.} Consequently, $ \dim V = \dim W $, so $ \mc{V} = \Lie(W) $ is an ideal in $ \mf{n} $. But $ \eta_j([X,\xi_i]) = d\eta_j(\xi_i,X)=0 $ for all $ X \in \mf{n}$, so $ \mc{V} = Z(\mf{n}) $. Moreover, $ N/V $ is nilpotent and Ricci-flat, so a result by \textsc{Milnor} implies that its Lie algebra $ \mf{n}/\mc{V} = \mf{n}/Z(\mf{n}) $ is Abelian \cite[Theorem 2.4]{Mil}. Consequently, $ [\mf{n},\mf{n}] \subset Z(\mf{n}) $, so $ \mf{n} $ is 2-step nilpotent.
\end{proof}

\begin{theorem}
The only simply connected nilpotent Lie groups with a left-invariant degenerate 3-$(\alpha,\delta)$-Sasakian structure are the quaternionic Heisenberg groups, endowed with the structure described in \autoref{heisenberg}.
\end{theorem}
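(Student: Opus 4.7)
\textbf{Strategy.} By the preceding proposition, $\mf{n}$ is $2$-step nilpotent with $Z(\mf{n}) = \mc{V}$. Consequently, the Lie bracket vanishes on $\mc{V} \times \mf{n}$ and is entirely encoded by its restriction $[\cdot,\cdot]\colon \mc{H} \times \mc{H} \to \mc{V}$. The plan is to pin this restriction down explicitly via the degenerate condition, recognise it as the quaternionic Heisenberg bracket after choosing an adapted basis, and then absorb the resulting scaling in $\alpha$ by an $\mc{H}$-homothetic deformation.

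\textbf{Step 1: Compute the bracket.} For left-invariant $X, Y \in \mc{H}$ the standard identity reads $d\eta_i(X, Y) = -\eta_i([X,Y])$. Combined with the degenerate condition \eqref{degencond} and the defining relation $\Phi_i(X,Y) = g(X, \varphi_i Y)$, this yields
\[
\eta_i([X,Y]) = -2\alpha\, g(X, \varphi_i Y) = 2\alpha\, g(\varphi_i X, Y),
\]
using skew-symmetry of $\varphi_i$ w.r.t.\ $g$ on $\mc{H}$. Since $[X,Y] \in [\mf{n},\mf{n}] \subset \mc{V}$ and $\xi_1, \xi_2, \xi_3$ is a $g$-orthonormal basis of $\mc{V}$, we obtain
\[
[X,Y] = 2\alpha \sum_{i=1}^3 g(\varphi_i X, Y)\, \xi_i, \qquad X, Y \in \mc{H},
\]
and this, together with $[\mc{V}, \mf{n}] = 0$, determines the Lie algebra structure completely.

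\textbf{Step 2: Identify $\mc{H}$ with $\H^n$.} The compatibility relations of the almost $3$-contact metric structure imply that the $\varphi_i|_\mc{H}$ satisfy $\varphi_i^2 = -\id$ and $\varphi_i \varphi_j = \varphi_k$ for even permutations, and they act by $g$-isometries on $\mc{H}$. Thus $(\mc{H}, g|_\mc{H}, \varphi_1, \varphi_2, \varphi_3)$ is a quaternionic Hermitian vector space. Choose a $g$-orthonormal $\H$-basis of $\mc{H}$; this produces a linear isometry $\mc{H} \xrightarrow{\sim} \H^n$ that intertwines each $\varphi_i$ with left multiplication by $\xi_i$. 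Under this identification, the bracket formula from Step 1 becomes exactly $2\alpha$ times the bracket of the quaternionic Heisenberg algebra $\mf{h}$ in \autoref{heisenberg}.

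\textbf{Step 3: Absorb the factor $2\alpha$ and integrate.} The extra scalar $2\alpha$ is precisely what is realised by the $\mc{H}$-homothetic deformation recalled at the end of \autoref{heisenberg}: choosing deformation parameters matching $\alpha$ renders the normalised structure isomorphic, as a tuple $(g, \xi_i, \eta_i, \varphi_i)$, to the one on $N$. Since $N$ is simply connected and its Lie algebra is isomorphic to $\mf{h}$, it is Lie-group isomorphic to the quaternionic Heisenberg group $H$, and by construction this isomorphism carries all four structure tensors of $N$ onto the ($\mc{H}$-homothetically deformed) ones of $H$.

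\textbf{Main obstacle.} The conceptual content is entirely in Steps 1 and 2; the only genuine care needed is the bookkeeping in Step 3, i.e.\ matching the global factor $2\alpha$ produced by the bracket computation with the specific rescaling of $(g, \xi_i, \eta_i)$ imposed by the $\mc{H}$-homothetic deformation, so that the value of $\alpha$ on $N$ and on the deformed Heisenberg model agree on the nose.
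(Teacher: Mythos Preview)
Your proof is correct and follows essentially the same approach as the paper: both use the previous proposition to reduce to a $2$-step nilpotent algebra with $Z(\mf{n})=\mc{V}$, identify $\mc{H}$ with $\H^n$ via a quaternionic orthonormal basis, read off the bracket from $\eta_i([X,Y])=-d\eta_i(X,Y)=2\alpha\,g(\varphi_iX,Y)$, and absorb the scalar via an $\mc{H}$-homothetic deformation. The only cosmetic difference is that you compute the bracket explicitly first and then match, whereas the paper writes down the linear isomorphism $\psi$ first and then verifies it is a Lie algebra map.
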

\begin{proof}
Since $ N $ is simply connected, it suffices to construct a Lie algebra isomorphism $ \psi: \mf{n} \to \mf{h} $ which preserves the structure tensors. First, let $ \psi $ map the Reeb vector fields of $ \mf{n} $ to those of $ \mf{h} $. Both $ \mc{H} \subset \mf{n} $ and $ \H^n \subset \mf{h} $ are left quaternionic vector spaces with compatible inner products, so they admit orthonormal bases of the form $ (e_1,\varphi_1e_1,\varphi_2e_1,\varphi_3e_1, \ldots, e_n, \varphi_1e_n, \varphi_2e_n,\varphi_3 e_n) $. Let $ \psi $ map such a basis of $ \mc{H} $ to a corresponding basis of $ \H^n $.

From these definitions, it is clear that $ \psi $ is a \emph{linear} isomorphism which respects the structure tensors, so it only remains to check that $ \psi $ preserves the Lie bracket. To this end, first note that by the previous proposition: $ \psi (Z(\mf{n})) = \psi (\mc{V}) = Z = Z(\mf{h}) $. Likewise, $ [\psi(\mc{H}),\psi(\mc{H})] = [\H^n,\H^n] \subset Z $ and $ \psi[\mc{H},\mc{H}] \subset \psi(\mc{V}) = Z $. Finally, for all $ X, Y \in \mc{H} $, we have (potentially after a suitable $ \mc{H} $-homothetic deformation to match the values of $ \alpha $):
\begin{align*}
\eta_i^\mf{h}([\psi X,\psi Y]) &= d \eta_i^\mf{h}(\psi Y,\psi X) = 2 \alpha g^\mf{h}(\varphi_i^\mf{h}\psi X, \psi Y) = 2\alpha g^\mf{h}(\psi \varphi_i^\mf{n} X, \psi Y) \\
&= 2\alpha g^\mf{n}(\varphi_i^\mf{n} X, Y) = d \eta_i^\mf{n}(Y,X) = \eta_i^\mf{n}([X,Y]) = \eta_i^\mf{h}(\psi[X,Y]) \, .
\end{align*}
\end{proof}

\begin{remark}
This theorem is very analogous to a result by \textsc{Andrada}, \textsc{Fino} and \textsc{Vezzoni}: They showed that the only simply connected nilpotent Lie groups with a left-invariant \emph{Sasakian} structure are the odd-dimensional Heisenberg groups \cite[Theorem 3.9]{AFV}. Finally, however, we would like to warn the reader about the somewhat counterintuitive fact that, at least in the Sasakian realm, there exist (non-nilpotent) Lie groups with a left-invariant structure where the Reeb vector field does \emph{not} lie in the center of the Lie algebra.
\end{remark}

\printbibliography

\textsc{Oliver Goertsches and Leon Roschig, Philipps-Universität Marburg, Fachbereich Mathematik und Informatik, Hans-Meerwein-Straße, 35043 Marburg} \\
\textit{E-mail addresses}: \texttt{goertsch@mathematik.uni-marburg.de}, \\
\texttt{roschig@mathematik.uni-marburg.de}

\textsc{Leander Stecker, Universität Hamburg, Fachbereich Mathematik, Bundes\-straße, 20146 Hamburg} \\
\textit{E-mail address}: \texttt{leander.stecker@uni-hamburg.de}

\end{document}